\documentclass[11pt]{article}
\usepackage{amsthm, amsmath, amssymb, amsfonts, url, booktabs, tikz, setspace, fancyhdr, bm}
\usepackage{geometry}
\usepackage{enumerate}
\usepackage[shortlabels]{enumitem}
\usepackage[babel]{microtype}
\usepackage[english]{babel}
\usepackage{comment}
\usepackage{bbm}
\usepackage{bm}
\usepackage{csquotes}
\usepackage{mathabx}
\usetikzlibrary{calc, angles, quotes, intersections}
\usepackage{graphicx}
\usepackage{float}
\usepackage{xcolor}

\counterwithin{figure}{section}

\newtheorem{theorem}{Theorem}[section]

\newtheorem{lemma}[theorem]{Lemma}

\newtheorem{claim}[theorem]{Claim}

\theoremstyle{definition}

\newlist{Case}{enumerate}{2}
\setlist[Case, 1]{%
   label={\bfseries Case \arabic*.},
   labelindent=1em,labelwidth=1.3cm,labelsep*=1em,leftmargin=!
}
\setlist[Case, 2]{%
   label={\bfseries Subcase \arabic{Casei}.\arabic*.},
   labelindent=-1em,labelwidth=1.3cm,labelsep*=1em,leftmargin=!
}

\newenvironment{poc}{\begin{proof}[Proof of claim]}{\end{proof}}

\newcommand{\V}[1]{\bm{#1}}

\newcommand{\me}{{\mathsf e}}
\usepackage{hyperref}

\usepackage[capitalise]{cleveref}

\crefname{prop}{proposition}{propositions}
\Crefname{prop}{Proposition}{Propositions}
\crefname{claim}{claim}{claims}
\Crefname{claim}{Claim}{Claims}
\crefname{cor}{corollary}{corollaries}
\Crefname{cor}{Corollary}{Corollaries}
\crefname{defn}{definition}{definitions}
\Crefname{defn}{Definition}{Definitions}

\newcommand{\T}{\mathbb{T}}
\newcommand{\Z}{\mathbb{Z}}
\newcommand{\R}{\mathbb{R}}
\newcommand{\Q}{\mathbb{Q}}
\newcommand{\Prob}{\mathbb{P}}

\title{An improved lower bound for the van der Waerden number \(w(3,k)\)}
\author{Haitao Cao\thanks{School of Mathematical Sciences, Zhejiang University. Email:3230104589@zju.edu.cn.}}
\date{}

\begin{document}

\maketitle

\begin{abstract}
For an integer \(k\ge 3\), let \(w(3,k)\) be the least \(n\) such that every red-blue coloring of \([n]\) contains either a nontrivial three-term arithmetic progression in blue or a nontrivial \(k\)-term arithmetic progression in red. A recent breakthrough of Green proved that \(w(3,k)\ge k^{\Omega\left(\left(\frac{\log k}{\log\log k}\right)^{1/3}\right)}\) when \(k\) is large, and Hunter later improved the bound to \(w(3,k)\ge k^{\Omega\left(\frac{\log k}{\log\log k}\right)}\). On the other hand, Green remarked that it is reasonable to believe that \(w(3,k)\le k^{O(\log k)}\). We prove that \(w(3,k)\ge k^{\Omega(\log k)}\), which perhaps gives some evidence that \(k^{\Theta(\log k)}\) is the correct order of magnitude.
\end{abstract}

\section{Introduction}

For an integer \(k\ge 3\), a \(k\)-AP is a nontrivial arithmetic progression of length \(k\). Let \(w(3,k)\) be the least positive integer \(n\) such that for any red-blue coloring of \([n]\), there is either a blue 3-AP or a red k-AP. Van der Waerden~\cite{VanDerWaerden1927} proved that \(w(3,k)\) is finite. From then on, the order of \(w(3,k)\) for large \(k\) has been widely studied.

For many years, the evidence pointed towards polynomial growth. Brown, Landman and Robertson~\cite{BrownLandmanRobertson2008} proved a near-quadratic lower bound, and Li and Shu~\cite{LiShu2010} sharpened it to \(w(3,k)\ge k^{2-o(1)}\). As Green~\cite{Green2022} mentioned, Graham had conjectured the quadratic upper bound \(w(3,k)=O(k^{2})\), a prediction supported by the numerical data then available. A recent breakthrough of Green~\cite{Green2022} overturned this picture. He proved the first superpolynomial lower bound
\[
w(3,k)\ge k^{\Omega\left(\frac{\log k}{\log\log k}\right)^{1/3}}.
\]
Hunter~\cite{Hunter2022} later simplified the construction and improved the estimate to \(w(3,k)\ge k^{\Omega\left(\frac{\log k}{\log\log k}\right)}\). For a broader overview of the problem and Green's breakthrough, we refer interested readers to Klarreich's article~\cite{Klarreich2021}.

In the other direction, upper bounds for \(w(3,k)\) can be obtained through the classical Roth-type problem concerning the maximal size of subsets of \([n]\) without nontrivial three-term arithmetic progressions. Let
\[
r_3(n)=\max\{|A|:A\subseteq [n],\ A\text{ contains no nontrivial }3\text{-AP}\}.
\]
Indeed, an upper bound for \(r_3(n)\) can provide an upper bound for \(w(3,k)\). More precisely, if \(r_3(n)\le n\exp\left(-\Omega((\log n)^s)\right)\), then \(w(3,k)\le \exp\left(O((\log k)^{1/s})\right)\). To see this, suppose that \([n]\) is colored red and blue, and let \(B\) denote the blue set. If \(B\) contains no nontrivial three-term arithmetic progression, then by definition \(|B|\le r_3(n)\). Hence \(\frac{|B|}{n}\le \exp\left(-\Omega((\log n)^s)\right)\), and therefore the red set \(R=[n]\setminus B\) has density at least \(\frac{|R|}{n}\ge 1-\exp\left(-\Omega((\log n)^s)\right)\). Choose the minimal \(n\) such that \(\log k=o((\log n)^s)\). Then the blue density is \(o(1)\), so the red density is \(1-o(1)\). By Szemerédi's theorem~\cite{Szemeredi1975}, the red set contains a \(k\)-term arithmetic progression, and hence \(w(3,k)\le n\). Since \(\log k=o((\log n)^s)\), we have \(\log n=\Theta((\log k)^{1/s})\), and therefore \(w(3,k)\le \exp\left(O((\log k)^{1/s})\right)\). Notice that the current best known upper bound for \(r_3(n)\) is \(r_3(n)\le n\exp\left(-\Omega\left(\frac{(\log n)^{1/6}}{\log\log n}\right)\right),\) which was obtained by Raghavan~\cite{Raghavan2026}. Through the above density argument, this yields the upper bound \(w(3,k)\le \exp\left(O\left((\log k)^6(\log\log k)^6\right)\right).\) The historical development of quantitative improvements on the upper bounds for \(r_3(n)\) can be found in ~\cite{BloomSisask2023,Bourgain1999,2023StrongBounds3Progressions,Raghavan2026,Roth1953,Roth1954,Roth1970,Roth1972,Sanders2011}. 

Green~\cite{Green2022} remarked that it is reasonable to expect \(w(3,k)\le k^{O(\log k)}\), as the Behrend construction~\cite{Behrend1946} for \(3\)-AP-free sets is believed to be asymptotically sharp for the Roth problem and suggests this order of magnitude, also see a recent improvement~\cite{ElsholtzHunterProskeSauermann2024}.
Building on Hunter's work~\cite{Hunter2022}, we prove the corresponding lower bound using a more refined Fourier control argument.

\begin{theorem}\label{thm:main}
There is an absolute constant \(c>0\) such that, for all sufficiently large \(k\),
\[
w(3,k)\ge k^{c\log k}.
\]
\end{theorem}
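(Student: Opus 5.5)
We follow the Green--Hunter framework. The idea is to build a random blue set \(B\subseteq[N]\), with \(N=k^{c\log k}\), that has no nontrivial 3-AP but meets every \(k\)-AP in \([N]\). Concretely, we embed \([N]\) into a torus \(\T^D\) by \(x\mapsto x\theta\), where \(\theta\in\T^D\) is chosen uniformly at random, and take \(D\asymp \log k\). We then choose random ellipsoidal annuli in \(\T^D\), i.e.\ sets of the form \(\{y: \sum_i a_i\|y_i-z_i\|^2\in[1-\eta,1]\}\cdot\rho\), whose union \(A\) has no nontrivial 3-AP. This uses Behrend-type strict convexity: if \(y,y+u,y+2u\) all lie on a thin shell of a single ellipsoid, then \(u\) is tiny. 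The blue set is then \(B=\{x\in[N]: x\theta\in A\}\), with a small cleanup that removes the (few) 3-APs whose common difference \(d\) has \(d\theta\) very close to \(0\). This removal is done by deleting points lying in a structured set, and it is controlled by a separate argument via the pigeonhole principle on \(\|d\theta\|\).

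The key steps are as follows. First, we fix the geometric lemma: for an annulus of relative thickness \(\eta\) in dimension \(D\), any 3-AP inside it has difference of size \(O(\sqrt\eta)\rho\). Hence if \(\|d\theta\|_{\T^D}\) is not that small, no blue 3-AP has difference \(d\). Second, we show that with positive probability over \(\theta\), every \(d\le N\) satisfies \(\|d\theta\|\ge N^{-1/D}\) roughly. This is Dirichlet-sharp, so the scales must be chosen with \(\rho\sqrt\eta\) below this threshold while the annulus still has volume comparable to \(\eta\). Third, and this is the heart of the matter, we show that every \(k\)-AP \(P=\{x+jd\}_{j<k}\) hits \(B\). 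Its image is an orbit \(\{x\theta+j d\theta\}\). We use many independent random translates \(z^{(1)},\dots,z^{(M)}\) of the annulus and take the union \(A\). The probability that a fixed \(P\) misses a random translate is \(1-\mu\), where we require \(\mu\gg 1/k\), and Fourier analysis gives that the empirical measure of the orbit is close to uniform. The failure probability is then about \((1-c\mu)^M\), and we need this to beat a union bound over \(N^2\) progressions. Hunter's loss of \(\log\log k\) comes from crude equidistribution: he controls Fourier coefficients of the annulus indicator only up to frequency \(\exp(D\log D)\). The refinement we propose is to smooth the annulus indicator by convolving with a bump of width \(\eta\rho/D\), whose Fourier transform decays like \(\exp(-\|\xi\|\eta\rho/D)\). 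The orbit-sum contributions \(\sum_j e(j\,\xi\cdot d\theta)\) are then bounded by \(\min(k,1/\|\xi\cdot d\theta\|)\), which is summed using a large-sieve/Vinogradov-type bound over \(\xi\) in a box of radius \(R\approx D/(\eta\rho)\). Randomness of \(\theta\) ensures that, except for a negligible set of \(d\), few \(\xi\) have \(\|\xi\cdot d\theta\|<1/k\).

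The main obstacle is the third step: making the count of resonant frequencies \(\xi\) (those with \(\|\xi\cdot d\theta\|\le 1/k\)), weighted by the Fourier decay, genuinely \(o(\mu)\) when \(D=c'\log k\) and \(N=k^{c\log k}\). We would first try to bound, for a typical \(\theta\), the number of pairs \((d,\xi)\) with \(d\le N\), \(|\xi|\le R\) and \(\|\xi\cdot d\theta\|\le 1/k\) by their expectation \(N R^D/k\), plus a geometry-of-numbers argument (Minkowski's second theorem on the lattice generated by \(d\theta\)) to handle the exceptional \(d\). We would treat the exceptional \(d\), those with a short dual vector, by replacing \(P\) with a sub-progression on which \(d\theta\) lies near a lower-dimensional subtorus, and we would recurse on dimension. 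Balancing then gives \(D\log R\asymp\log k\cdot\log k\) up to constants, which yields \(\log N\asymp(\log k)^2\), i.e.\ \(w(3,k)\ge k^{c\log k}\).
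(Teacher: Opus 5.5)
Your outline has the right global architecture: the embedding $n\mapsto n\theta$ into $\T^D$ with $D\asymp\log k$, thin convex shells, a union bound giving $\|d\theta\|\ge cN^{-1/D}$ for all $d\le N$, and many independent chances per progression. However, there is a genuine gap. The step that carries the theorem, that every progression of length about $k$ meets the blue set, is not proved, and the mechanism you sketch for it fails at the scales involved.

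\textbf{Why the sketched mechanism fails.} By Dirichlet some $d\le N$ has $\|d\theta\|\le N^{-1/D}$, so you are forced to take $\rho\sqrt{\eta}\le N^{-1/D}=k^{-\Theta(1)}$ up to constants. Your cutoff is then $R\approx D/(\eta\rho)\ge k^{\Theta(1)}$, and, as your own balancing says, $R^D=k^{\Theta(\log k)}$. Your pair count $NR^D/k$ then says that for a \emph{typical} $d$ there are about $R^D/k=k^{\Theta(\log k)}$ frequencies $\xi$ in the box with $\|\xi\cdot d\theta\|<1/k$. So the claim that randomness of $\theta$ leaves few resonant $\xi$ outside a negligible set of $d$ is false.

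Each resonant term contributes its Fourier coefficient times the full orbit length. Your smoothing supplies no decay on this box, since $\exp(-\|\xi\|\eta\rho/D)\ge e^{-1}$ there. The Fourier coefficients of a shell also have no sign. So nothing in your sketch prevents the resonant terms from swamping the main term $k\mu$. The treatment of exceptional $d$ (Minkowski, sub-progressions near subtori, recursion on dimension) is unspecified. It would also face the real difficulty that an orbit concentrated near a subtorus may simply miss the shell around a given center.

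\textbf{Two smaller points.} First, you never explain why a 3-AP cannot use points from \emph{different} translates. This needs a separation condition such as Lemma~\ref{lem:centers}(1), namely $z_i-2z_j+z_\ell\notin Q_{1/25}$ unless $i=j=\ell$, and this caps the number of centers at $e^{O(D)}$. Second, the deletion ``cleanup'' is unnecessary after your second step and unsafe, since removing blue points can create red $k$-APs.

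\textbf{What the paper does instead.} It never does Fourier analysis at the scale of the shell.
\begin{enumerate}[(i)]
\item It works against the fixed-size box $Q_{1/400}$ via the product tent function $\Psi_r$. Its Fourier coefficients are \emph{nonnegative}, and its essential spectrum $S_r$ has only $e^{O(r)}$ elements (Lemma~\ref{lem:spectrum}).
\item A random $\theta$ ensures that, simultaneously for all $d\le e^{r^2/1000}$, the resonant frequencies of $d\theta$ in $S_r$ span a space $V$ of dimension $<r/1000$ (Lemma~\ref{lem:rotation}). There are only $e^{O(r^2)}$ such $V$.
\item The centers are chosen so that for every such $V$ and every $x$, there are $e^{r/100}$ centers $z_i$ whose phases $\xi\cdot(x-z_i-u_i)$, $\xi\in L_V$, vanish for some shift $u_i\in Q_{1/400}$ (Lemma~\ref{lem:centers}(2)).
\item With the Fej\'er weight, the resonant terms are then nonnegative rather than small. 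The non-resonant terms in $S_r$ are damped by $400^{-r-1}$, and the tail outside $S_r$ is below $400^{-r}/100$. Hence the orbit enters $z_i+Q_{1/100}$ for all these $i$ (Lemma~\ref{lem:orbit}).
\item Only after these hit points are fixed are the radii $s_i$ drawn, independently. Each such center then has probability $1/(re^{r/200})$ of putting its shell through its hit point.
\end{enumerate}
The ideas your proposal is missing are positivity combined with phase alignment on a low-dimensional resonant lattice, and the decoupling of the box argument from the thin shell.
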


\section{Proof of Theorem~\ref{thm:main}}

Our proof is based on the torus constructions of
Green~\cite{Green2022} and Hunter~\cite{Hunter2022}.
We map \(n\in[N]\) to \(n\V{\theta}\in\T^r\) and color \(n\)
blue when its image lies in one of several thin shells.
Green~\cite{Green2022} used random ellipsoidal shells. Hunter~\cite{Hunter2022} simplified the
argument by choosing the shell radii independently, giving
each long progression many independent chances to contain
a blue point.

Our main ingredient is to work inside boxes whose side lengths
do not depend on the dimension. These boxes admit a product
test function with nonnegative Fourier coefficients.
By selecting frequencies according to the sizes of their
coefficients, we keep only exponentially many frequencies,
with an error smaller than the zero-frequency term.
A measure estimate for the images of boxes in quotient tori
then allows us to choose centers so that small shifts make
the relevant resonant phases zero.
Together with Fej\'er weights, this shows that every
sufficiently long progression meets many of the boxes.

The centers and the rotation are chosen so that any blue
\(3\)-AP must lie in a single shell. The parallelogram
identity would then force its common difference on the torus
to be too close to zero, contrary to the choice of rotation.
To rule out red \(k\)-APs, we first fix the points where each
long progression meets the boxes, and then choose the shell
radii independently. A union bound shows that, with positive
probability, every such progression contains a blue point.
This finally removes the \(\log\log k\) loss
in Hunter's bound~\cite{Hunter2022}.

\subsection{Background and useful tools}\label{subsec:tools}

We map the sequence into the torus \(\T^{r}\), where \(\T=\R/\Z\) is viewed as \([0,1)\) with addition modulo one. For a positive integer \(r\), let \(m_{\T^{r}}\) denote normalized Haar measure on \(\T^{r}\), namely its uniform probability measure. The dimension \(r\) is the only scale parameter in the construction; eventually \(r=\Theta(\log k)\), and all \(o(1)\)-terms refer to \(r\to\infty\). Whenever a positive real expression below is required to be an integer, we round it up. We suppress the ceiling signs, since they have no effect on the estimates.

For \(t\in\T\), let \(\|t\|_{\T}\) be its distance to the nearest integer. For \(\V{x}=(x_{1},\ldots,x_{r})\in\T^{r}\), set \(\|\V{x}\|_{\T^{r},\infty}=\max_{1\le j\le r}\|x_{j}\|_{\T}\). If \(0<s<\frac{1}{2}\), we use \(Q_{s}\) for both \((-s,s)^{r}\subseteq\R^{r}\) and its injective image in \(\T^{r}\). Under this identification, \(\|\cdot\|_{2}\) denotes the Euclidean norm on \(Q_{s}\). 

For a fixed \(\V{\theta}\in\T^{r}\), the map \(n\to n\V{\theta}\) sends an AP \(\{n_{0}+jd:0\le j<\ell\}\) to the orbit segment \(\{n_{0}\V{\theta}+j(d\V{\theta}):0\le j<\ell\}\). We later choose a set \(B\subseteq\T^{r}\) and color \(n\) blue exactly when \(n\V{\theta}\in B\). Additive restrictions on B will rule out blue 3-APs, by further ensuring that sufficiently long orbit segments must intersect B, which can forbid red k-APs.

Write \(\me(t)=\exp(2\pi it)\). The characters of \(\T^{r}\) are \(\V{x}\to\me(\V{\xi}\cdot\V{x})\), indexed by \(\V{\xi}\in\Z^{r}\), and our Fourier convention is \(\widehat f(\V{\xi})=\int_{\T^{r}}f(\V{x})\me(-\V{\xi}\cdot\V{x})\,d\V{x}\).

We use the following lattice facts. Call \(L\subseteq\Z^{r}\) saturated if \(q\V{v}\in L\), with \(q\) a positive integer and \(\V{v}\in\Z^{r}\), always forces \(\V{v}\in L\). If \(\V{\lambda}_{1},\ldots,\V{\lambda}_{m}\) is a \(\Z\)-basis of a saturated lattice, then
\(
\V{x}\to(\V{\lambda}_{1}\cdot\V{x},\ldots,\V{\lambda}_{m}\cdot\V{x})
\)
maps \(\T^{r}\) onto \(\T^{m}\); this follows from the Smith normal form. We also use that a continuous surjective homomorphism between compact tori sends normalized Haar measure to normalized Haar measure.

We need a function supported on a short interval with nonnegative Fourier coefficients. Write \(u_{+}=\max\{u,0\}\), and define \(\psi(x)=(1-400\|x\|_{\T})_{+}\). A direct calculation gives \(\widehat\psi(0)=\frac{1}{400}\) and \(\widehat\psi(n)=\frac{400\sin^{2}\left(\frac{\pi n}{400}\right)}{\pi^{2}n^{2}}\) for \(n\ne0\). Thus \(\widehat\psi(n)\ge0\), \(\widehat\psi(n)=O((1+n^{2})^{-1})\), and \(\sum_{n\in\Z}\widehat\psi(n)=\psi(0)=1\). For \(\V{x}=(x_{1},\ldots,x_{r})\in\T^{r}\), set \(\Psi_{r}(\V{x})=\prod_{j=1}^{r}\psi(x_{j})\). Then
\[
\widehat\Psi_{r}(\V{\xi})=\prod_{j=1}^{r}\widehat\psi(\xi_{j})\ge 0,\sum_{\V{\xi}\in\Z^{r}}\widehat\Psi_{r}(\V{\xi})=1,\widehat\Psi_{r}(0)=400^{-r}.
\]
Its Fourier series converges absolutely and uniformly. Moreover, \(\Psi_{r}(\V{x})>0\) exactly when \(\V{x}\in Q_{\frac{1}{400}}\), so \(\Psi_r(\V{x})\) can be viewed as a characteristic function that helps us tell whether \(\V{x}\) lies in the small box \(Q_{\frac{1}{400}}\).

Lemma~\ref{lem:spectrum} shows that almost all of the Fourier mass is carried by exponentially many frequencies, with an error smaller than a fixed fraction of the zero coefficient.

\begin{lemma}\label{lem:spectrum}
For every sufficiently large \(r\), there is a finite set \(S_{r}\subseteq\Z^{r}\) such that
\begin{enumerate}
    \item[\textup{(1)}] \(\V{0}\in S_{r}.\)
    \item[\textup{(2)}] \(|S_{r}|\le \exp(100r).\)
    \item[\textup{(3)}] \(\sum_{\V{\xi}\notin S_{r}}\widehat\Psi_{r}(\V{\xi})<\frac{400^{-r}}{100}.\)
\end{enumerate}
\end{lemma}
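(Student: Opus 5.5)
Take \(S_r\) to be a superlevel set of the coefficients: \(S_r=\{\V{\xi}:\widehat\Psi_r(\V{\xi})\ge \tau\}\) for a threshold \(\tau\) chosen below. Property (1) is immediate as long as \(\tau\le 400^{-r}\), because \(\widehat\Psi_r(\V{0})=400^{-r}\). Property (2) comes from Markov's inequality. Since \(\sum_{\V{\xi}}\widehat\Psi_r(\V{\xi})=1\) and all coefficients are nonnegative, we get \(|S_r|\le \tau^{-1}\). It therefore suffices to take \(\tau=\exp(-100r)\), which is smaller than \(400^{-r}=\exp(-r\log 400)\approx\exp(-6r)\).

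The remaining task is (3): the tail \(\sum_{\widehat\Psi_r(\V{\xi})<\tau}\widehat\Psi_r(\V{\xi})\) must be smaller than \(400^{-r}/100\). I would use Rankin's trick. For any \(0<\alpha<1\), each term with \(\widehat\Psi_r(\V{\xi})<\tau\) satisfies \(\widehat\Psi_r(\V{\xi})\le \tau^{1-\alpha}\widehat\Psi_r(\V{\xi})^{\alpha}\). Summing and using the product structure gives
\[
\sum_{\V{\xi}\notin S_r}\widehat\Psi_r(\V{\xi})\le \tau^{1-\alpha}\Bigl(\sum_{n\in\Z}\widehat\psi(n)^{\alpha}\Bigr)^{r}.
\]
The one-dimensional sum \(C_\alpha=\sum_n\widehat\psi(n)^\alpha\) is finite whenever \(\alpha>1/2\), because \(\widehat\psi(n)=O((1+n^2)^{-1})\).

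The key step is to estimate \(C_\alpha\) for a concrete \(\alpha\), say \(\alpha=3/4\). The bound \(\widehat\psi(n)\le 400\cdot\min\bigl(\frac{1}{400^2},\frac{1}{\pi^2 n^2}\bigr)\) follows from \(\sin^2 x\le \min(x^2,1)\). It gives \(\widehat\psi(n)\le 1/400\) for \(|n|\lesssim 400\) and \(\widehat\psi(n)\le 400/(\pi^2n^2)\) beyond that. Hence
\[
C_{3/4}\lesssim 800\cdot 400^{-3/4}+2\sum_{n>400}\Bigl(\frac{400}{\pi^2 n^2}\Bigr)^{3/4}=O(400^{1/4}),
\]
so \(C_{3/4}\le e^{c_0}\) for an explicit modest constant \(c_0\); a crude bound such as \(C_{3/4}\le 100\) will do. The tail is then at most \(\exp(-25r)\cdot 100^{r}=\exp(-(25-\log 100)r)\). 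This is far below \(400^{-r}/100=\exp(-r\log 400-\log 100)\) for large \(r\), since \(25-\log 100\approx 20.4>\log 400\approx 6\).

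The only real obstacle is making the constant \(C_\alpha\) explicit enough that the exponents close. The margin is generous: there is a gap between \(100\) and roughly \(6+\log C_\alpha/(1-\alpha)\). If \(\alpha=3/4\) turned out awkward, \(\alpha=2/3\) or a direct Chernoff-type bound on \(\sum_j\log(1/\widehat\psi(\xi_j))\) would serve equally well.
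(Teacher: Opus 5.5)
Your proposal is correct and matches the paper's proof. Both use the superlevel set \(S_r=\{\V{\xi}:\widehat\Psi_r(\V{\xi})\ge e^{-100r}\}\), the Markov-type count for \(|S_r|\), and Rankin's trick with exponent \(3/4\) combined with the product structure. Your explicit estimate of \(C_{3/4}=\sum_n\widehat\psi(n)^{3/4}\) (it in fact gives \(C_{3/4}<13\)) supplies the numerical check that the paper only asserts when it claims \(\frac{100}{4}-\log C_{3/4}>\log 400+2\).
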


\begin{proof}[Proof of Lemma~\ref{lem:spectrum}]
Since \(\sum_{n\in\Z}\widehat\psi(n)^{3/4}<\infty\), this sum is a finite constant. Therefore \(\log\left(\sum_{n\in\Z}\widehat\psi(n)^{3/4}\right)\) is finite, and hence \(\frac{100}{4}-\log\left(\sum_{n\in\Z}\widehat\psi(n)^{3/4}\right)>\log 400+2\) holds for the fixed constant \(100\). Define \(S_{r}=\{\V{\xi}\in\Z^{r}:\widehat\Psi_{r}(\V{\xi})\ge \exp(-100r)\}.\) If \(\V{\xi}\notin S_{r}\), then \(\widehat\Psi_{r}(\V{\xi})\le\exp\left(-\frac{100r}{4}\right)\widehat\Psi_{r}(\V{\xi})^{3/4}.\) Summing this inequality and using the product structure gives
\[
\sum_{\V{\xi}\notin S_{r}}\widehat\Psi_{r}(\V{\xi})
\le \exp\left(-\frac{100r}{4}\right)\left(\sum_{n\in\Z}\widehat\psi(n)^{3/4}\right)^{r}
\le \exp\left(-\left(\log 400+2\right)r\right)
<\frac{400^{-r}}{100}
\]
for all sufficiently large \(r\). Since all Fourier coefficients are nonnegative and sum to one, \(|S_{r}|\le \exp(100r)\). Moreover, \(100>\log 400\), so \(\V{0}\in S_{r}\). This proves the lemma.
\end{proof}

We need a lower bound for the image of a small box under a rational quotient map. Let \(V\subseteq\Q^{r}\) have dimension \(m\), and put \(L_{V}=V\cap\Z^{r}\), a saturated lattice of rank \(m\). For a \(\Z\)-basis \(\V{\lambda}_{1},\ldots,\V{\lambda}_{m}\) of \(L_{V}\), define the surjective homomorphism \(\phi_{V}:\T^{r}\longrightarrow\T^{m}\) by \(\phi_{V}(\V{x})=(\V{\lambda}_{1}\cdot\V{x},\ldots,\V{\lambda}_{m}\cdot\V{x})\).

\begin{lemma}\label{lem:box-image}
If \(0<t<\frac{1}{2}\), then \(m_{\T^{m}}(\phi_{V}(Q_{t}))\ge (2t)^{m}\).
\end{lemma}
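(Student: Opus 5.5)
The plan is to reduce to a square integer matrix acting on an \(m\)-dimensional coordinate subtorus, and then use the measure-preservation fact for surjective homomorphisms recalled in Section~\ref{subsec:tools}. Applying that fact directly to \(\phi_{V}\) on \(\T^{r}\) is too lossy. It gives \(m_{\T^{m}}(\phi_{V}(Q_{t}))=m_{\T^{r}}(\phi_{V}^{-1}(\phi_{V}(Q_{t})))\ge m_{\T^{r}}(Q_{t})=(2t)^{r}\), which has the wrong exponent. So I would first discard \(r-m\) coordinates.

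\textbf{Step 1: choose coordinates.} Let \(\Lambda\) be the \(m\times r\) integer matrix with rows \(\V{\lambda}_{1},\ldots,\V{\lambda}_{m}\). These rows are linearly independent over \(\Q\), so \(\Lambda\) has rank \(m\). Hence there is a set \(J\subseteq[r]\) of \(m\) columns for which the \(m\times m\) minor \(A=\Lambda_{J}\) satisfies \(D:=|\det A|\ge 1\).

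Consider points \(\V{x}\in Q_{t}\) whose coordinates outside \(J\) are zero. Such points form a copy of the box \(E=(-t,t)^{m}\) inside \(\T^{J}\cong\T^{m}\). On them, \(\phi_{V}(\V{x})=A\V{x}_{J}\). Therefore
\[
\phi_{V}(Q_{t})\supseteq A(E),
\]
where \(A\) now denotes the endomorphism \(\V{y}\mapsto A\V{y}\) of \(\T^{m}\). Since \(t<\frac12\), the box \(E\) embeds injectively in \(\T^{m}\), so \(m_{\T^{m}}(E)=(2t)^{m}\).

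\textbf{Step 2: \(A\) is a surjective homomorphism of \(\T^{m}\).} Because \(\det A\neq0\), \(A\) induces a surjective linear map \(\R^{m}\to\R^{m}\). It maps \(\Z^{m}\) into \(\Z^{m}\), so it descends to a continuous surjective homomorphism \(\T^{m}\to\T^{m}\), which is a \(D\)-to-one covering map.

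By the fact quoted in Section~\ref{subsec:tools}, this homomorphism pushes Haar measure forward to Haar measure. That is, \(m_{\T^{m}}(A^{-1}(F))=m_{\T^{m}}(F)\) for every measurable \(F\). The set \(A(E)\) is open, since \(A\) is an open map, so measurability is not an issue.

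\textbf{Step 3: conclude.} Take \(F=A(E)\). Since \(A^{-1}(A(E))\supseteq E\), we get
\[
m_{\T^{m}}(\phi_{V}(Q_{t}))\ge m_{\T^{m}}(A(E))=m_{\T^{m}}(A^{-1}(A(E)))\ge m_{\T^{m}}(E)=(2t)^{m}.
\]

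The only genuine idea is Step 1, restricting to a coordinate subtorus on which \(\phi_{V}\) becomes an isogeny. The main point to check is that this restriction loses nothing. It does not: the lower bound \(m(A(E))\ge m(E)\) holds for any surjective endomorphism, regardless of how large \(D\) is.
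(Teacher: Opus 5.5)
Your proof is correct and follows the paper's route: you restrict \(\phi_V\) to the copy of \((-t,t)^m\) supported on a set \(J\) of columns with \(\det A_J\neq 0\), where it becomes the torus endomorphism induced by \(A_J\). The only difference is the last step, where you use invariance of Haar measure under this surjective endomorphism, \(m_{\T^m}(A(E))=m_{\T^m}(A^{-1}(A(E)))\ge m_{\T^m}(E)\), instead of the paper's area-formula count with at most \(|\det A_J|\) preimages per point; this is a slightly cleaner way to obtain the same inequality.
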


\begin{proof}[Proof of Lemma~\ref{lem:box-image}]
Let \(A\) be the \(m\times r\) integer matrix whose rows are \(\V{\lambda}_{1},\ldots,\V{\lambda}_{m}\). Choose a set \(J\) of \(m\) coordinates such that the corresponding square minor \(A_{J}\) is nonsingular, and consider the copy of \((-t,t)^{m}\) in \(Q_{t}\) supported on \(J\). Since \(t<\frac{1}{2}\), this box embeds in \(\T^{r}\), and the restriction of \(\phi_{V}\) to it is \(\V{u}\to A_{J}\V{u}\pmod{\Z^{m}}\).

The Euclidean image \(A_{J}(-t,t)^{m}\) has volume \(|\det A_{J}|(2t)^{m}\). The torus endomorphism induced by \(A_{J}\) has kernel of order \(|\det A_{J}|\). Hence every point of \(\T^{m}\) has at most \(|\det A_{J}|\) preimages in \((-t,t)^{m}\). Applying the area formula before and after reduction modulo \(\Z^{m}\) gives \(|\det A_{J}|(2t)^{m}\le |\det A_{J}|m_{\T^{m}}(\phi_{V}(Q_{t}))\), which proves the result.
\end{proof}

We need centers that are sufficiently additively separated, with one further property: for every low-dimensional space generated by \(S_{r}\), the corresponding phases can be corrected at many centers.

\begin{lemma}\label{lem:centers}
For every sufficiently large \(r\), there are \(\exp\left(\frac{r}{10}\right)\) points \(\V{z}_{i}\in\T^{r}\), indexed by \(i\in[\exp\left(\frac{r}{10}\right)]\), with the following properties.
\begin{enumerate}
    \item[\textup{(1)}] If \(i,j,\ell\in[\exp\left(\frac{r}{10}\right)]\) are not all equal, then \(\V{z}_{i}-2\V{z}_{j}+\V{z}_{\ell}\notin Q_{\frac{1}{25}}\).
    \item[\textup{(2)}] Let \(V\subseteq\Q^{r}\) be generated by elements of \(S_{r}\), and suppose that \(\dim V<\frac{r}{1000}\). For every \(\V{x}\in\T^{r}\), there are \(\exp\left(\frac{r}{100}\right)\) pairwise distinct indices \(i\) and corresponding vectors \(\V{u}_{i}\in Q_{\frac{1}{400}}\) such that \(\V{\xi}\cdot(\V{x}-\V{z}_{i}-\V{u}_{i})=0\) in \(\T\) for every \(\V{\xi}\in L_{V}\).
\end{enumerate}
\end{lemma}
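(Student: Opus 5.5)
We choose the centers at random: let \(M=\exp\left(\frac{r}{10}\right)\) and let \(\V{z}_1,\ldots,\V{z}_M\) be independent and uniformly distributed in \(\T^{r}\). We show that each of properties (1) and (2) fails with probability \(o(1)\), so some choice satisfies both. Throughout we use that a continuous surjective homomorphism of tori pushes Haar measure to Haar measure.

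\emph{Property (1).} Fix \(i,j,\ell\in[M]\), not all equal. We check that \(\V{w}=\V{z}_i-2\V{z}_j+\V{z}_\ell\) is uniformly distributed on \(\T^r\).
\begin{itemize}
\item If some index appears exactly once, say \(\ell\ne i,j\), then conditioning on the other variables shows \(\V{w}\) is a uniform shift, hence uniform.
\item The only remaining case is \(i=\ell\ne j\). Then \(\V{w}=2(\V{z}_i-\V{z}_j)\). Here \(\V{z}_i-\V{z}_j\) is uniform, and \(\V{y}\mapsto 2\V{y}\) is a surjective endomorphism of \(\T^r\), so \(\V{w}\) is again uniform.
\end{itemize}
Hence \(\Prob(\V{w}\in Q_{1/25})=(2/25)^r\). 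A union bound over at most \(M^3=\exp(0.3r)\) triples gives failure probability at most \(\exp(0.3r)(2/25)^r=o(1)\).

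\emph{Property (2): a fixed \(V\) and a fixed grid point.} Fix \(V\) of dimension \(m<\frac{r}{1000}\). The condition \(\V{\xi}\cdot(\V{x}-\V{z}_i-\V{u}_i)=0\) for all \(\V{\xi}\in L_V\) is equivalent to \(\phi_V(\V{x}-\V{z}_i-\V{u}_i)=0\), since \(\V{\lambda}_1,\ldots,\V{\lambda}_m\) generate \(L_V\). To handle all \(\V{x}\) at once, fix a finite net \(\mathcal N\subseteq\T^r\) such that every \(\V{x}\) can be written as \(\V{y}+\V{w}\) with \(\V{y}\in\mathcal N\) and \(\V{w}\in Q_{1/800}\); we may take \(|\mathcal N|\le 1600^r\). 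For a fixed \(\V{y}\in\mathcal N\), let \(T_{\V{y}}=\phi_V(\V{y})-\phi_V(Q_{1/800})\). The points \(\phi_V(\V{z}_i)\) are independent and uniform on \(\T^m\). By Lemma~\ref{lem:box-image} with \(t=\frac{1}{800}\),
\[
p:=\Prob\bigl(\phi_V(\V{z}_i)\in T_{\V{y}}\bigr)\ge 400^{-m}\ge \exp(-0.006r).
\]
So the number of indices \(i\) with \(\phi_V(\V{z}_i)\in T_{\V{y}}\) is binomial with mean at least \(\exp(0.094r)\). By Chernoff, it is below \(\exp\left(\frac{r}{100}\right)\) with probability at most \(\exp(-c\exp(0.09r))\).

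\emph{Property (2): union bound and conclusion.} We need to count the spaces \(V\). Each is spanned by at most \(r/1000\) elements of \(S_r\), and \(|S_r|\le\exp(100r)\) by Lemma~\ref{lem:spectrum}. So there are at most \(\exp(r^2/10)\cdot(r+1)\) of them. Together with \(|\mathcal N|\le 1600^r\) choices of \(\V{y}\), the total failure probability is at most
\[
\exp\bigl(O(r^2)\bigr)\exp\bigl(-c\exp(0.09r)\bigr)=o(1),
\]
because the Chernoff bound is doubly exponential. Now, on the good event, take any \(\V{x}=\V{y}+\V{w}\) and any good index \(i\). Write \(\phi_V(\V{z}_i)=\phi_V(\V{y})-\phi_V(\V{u}')\) with \(\V{u}'\in Q_{1/800}\), and set \(\V{u}_i=\V{u}'+\V{w}\in Q_{1/400}\). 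Then
\[
\phi_V(\V{x}-\V{z}_i-\V{u}_i)=\phi_V(\V{y}+\V{w}-\V{z}_i-\V{u}'-\V{w})=0,
\]
as required. This gives at least \(\exp\left(\frac{r}{100}\right)\) distinct indices for every \(\V{x}\).

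The main obstacle is that the number of subspaces \(V\) is \(\exp(\Theta(r^2))\), which is superexponential. This is overcome only because \(M\) exceeds \(1/p\) by an exponential factor, so the Chernoff tail is doubly exponential. This is exactly where the restrictions \(\dim V<\frac{r}{1000}\) and the dimension-free box size \(\frac{1}{400}\) are used.
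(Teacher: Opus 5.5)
Your proof is correct and follows the same strategy as the paper. It uses random centers, a union bound over triples for (1), and for (2) a finite net for \(\V{x}\), the measure bound \(400^{-m}\) from Lemma~\ref{lem:box-image}, and a doubly exponential failure probability that absorbs the \(\exp(O(r^{2}))\) count of subspaces. The differences are minor: you use a single grid in \(\T^{r}\) of size \(1600^{r}\) instead of a maximal packing in the quotient \(\T^{m}\) (which avoids the packing step), and a Chernoff bound on the number of hits instead of splitting the indices into blocks; neither change affects the argument.
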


\begin{proof}[Proof of Lemma~\ref{lem:centers}]
Choose the \(\exp\left(\frac{r}{10}\right)\) points independently and uniformly at random according to Haar measure on \(\T^{r}\).

\begin{claim}\label{claim:center-separation}
With probability \(1-o(1)\), the first property of Lemma~\ref{lem:centers} holds.
\end{claim}

\begin{poc}
For any \(i,j,\ell\) that are not all equal, the random variable \(\V{z}_{i}-2\V{z}_{j}+\V{z}_{\ell}\) is Haar distributed on \(\T^{r}\). This is still true when \(i=\ell\ne j\), since multiplication by \(2\) keeps Haar measure unchanged. The union bound gives
\[
\Prob \bigl(\text{the separation property fails}\bigr)\le \exp\left(\frac{r}{3}\right)\left(\frac{2}{25}\right)^{r}=o(1).
\]
\end{poc}

\begin{claim}\label{claim:center-covering}
With probability \(1-o(1)\), the second property of Lemma~\ref{lem:centers} holds for all spaces \(V\) generated by elements of \(S_{r}\) with \(\dim V<\frac{r}{1000}\).
\end{claim}

\begin{poc}
Fix such a space \(V\) of dimension \(m<\frac{r}{1000}\), and put \(W_{V}=\phi_{V}(Q_{\frac{1}{1600}})\). We can choose a maximal finite set \(T_{V}\subseteq\T^{m}\) such that the sets \(\V{t}+W_V\) and \(\V{t}'+W_V\) are disjoint for any distinct \(\V{t},\V{t}'\in T_V\). This process stops because Lemma~\ref{lem:box-image} gives \(|T_{V}|m_{\T^{m}}(W_{V})\le 1\) and \(m_{\T^{m}}(W_{V})\ge\left(\frac{1}{800}\right)^{m}\), so \(|T_{V}|\le800^{m}\). For every \(\V{q}\in\T^{m}\), maximality forces \(\V{q}+W_{V}\) to meet some \(\V{t}+W_{V}\). Hence \(\V{q}\in \V{t}+(W_{V}-W_{V})\subseteq \V{t}+\phi_{V}(Q_{\frac{1}{800}})\).

Split the indices into \(\exp\left(\frac{r}{100}\right)\) disjoint blocks, each containing at least \(\exp\left(\frac{r}{20}\right)\) indices when \(r\) is large. Fix one block and one \(\V{t}_{0}\in T_{V}\). By Lemma~\ref{lem:box-image}, the set \(\V{t}_{0}-\phi_{V}(Q_{\frac{1}{800}})\) has measure at least \(400^{-m}\). The probability that no center in the block maps into this set is at most \(\exp\left(-\exp\left(\frac{r}{25}\right)\right).\)

Every such \(V\) has a basis contained in \(S_{r}\), so there are at most \(\sum_{0\le j<\frac{r}{1000}}|S_{r}|^{j}=\exp(O(r^{2}))\) such spaces. Taking a union bound over all these spaces, all \(\exp\left(\frac{r}{100}\right)\) blocks and all \(\V{t}_{0}\in T_{V}\), the total failure probability is still \(o(1)\). We may therefore assume that every block contains a suitable center for every \(V\) and every \(\V{t}_{0}\in T_{V}\).

Now fix \(\V{x}\in\T^{r}\). Choose \(\V{t}_{0}\in T_{V}\) and \(\V{w}_{1}\in\phi_{V}(Q_{\frac{1}{800}})\) such that \(\phi_{V}(\V{x})=\V{t}_{0}+\V{w}_{1}\). In each block choose a center \(\V{z}_{i}\) for which \(\phi_{V}(\V{z}_{i})=\V{t}_{0}-\V{w}_{2,i}\) for some \(\V{w}_{2,i}\in\phi_{V}(Q_{\frac{1}{800}})\). Then \(\phi_{V}(\V{x}-\V{z}_{i})=\V{w}_{1}+\V{w}_{2,i}\in\phi_{V}(Q_{\frac{1}{400}})\), so there is \(\V{u}_{i}\in Q_{\frac{1}{400}}\) with \(\phi_{V}(\V{x}-\V{z}_{i}-\V{u}_{i})=0\). The chosen indices lie in different blocks and are therefore distinct. Since the coordinate functions of \(\phi_{V}\) form a basis of \(L_{V}\), the required phase identity follows.
\end{poc}

Claims~\ref{claim:center-separation} and~\ref{claim:center-covering} hold together with positive probability, so a deterministic choice of the centers has both properties.
\end{proof}

For the orbit average, inspired by Hunter~\cite{Hunter2022}, we use the following Fej\'er weight. For \(n\in\Z\), define \(\omega(n)=\frac{\bigl(\exp(200r)-|n|\bigr)_{+}}{\bigl(\exp(200r)\bigr)^{2}}\). Then \(\omega\ge 0\) and \(\sum_{n\in\Z}\omega(n)=1\).

\begin{lemma}\label{lem:fejer}
For every \(t\in\T\),
\[
\widehat\omega(t):=\sum_{n\in\Z}\omega(n)\me(nt)=\left|\frac{1}{\exp(200r)}\sum_{j=0}^{\exp(200r)-1}\me(jt)\right|^{2}\ge 0,
\]
and \(\widehat\omega(t)\le1\). If \(t\ne0\) in \(\T\), then \(\widehat\omega(t)\le\frac{1}{\bigl(2\exp(200r)\|t\|_{\T}\bigr)^{2}}\).
\end{lemma}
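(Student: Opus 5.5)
Write \(M=\exp(200r)\), treated as an integer by the paper's rounding convention. The plan is to use the classical identity between the Fejér kernel and the squared modulus of a normalized Dirichlet sum, which we verify directly by expanding the square.

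\textbf{The identity.} First expand
\[
\left|\frac{1}{M}\sum_{j=0}^{M-1}\me(jt)\right|^{2}=\frac{1}{M^{2}}\sum_{j,j'=0}^{M-1}\me\bigl((j-j')t\bigr).
\]
Group the terms according to \(n=j-j'\). For \(|n|<M\), the number of pairs \((j,j')\in\{0,\ldots,M-1\}^{2}\) with \(j-j'=n\) is exactly \(M-|n|\). For \(|n|\ge M\) there are no such pairs. Hence the double sum equals
\[
\sum_{n}\frac{(M-|n|)_{+}}{M^{2}}\me(nt)=\sum_{n\in\Z}\omega(n)\me(nt)=\widehat\omega(t).
\]
This proves the displayed identity, and nonnegativity follows immediately since the right-hand side is a squared modulus.

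\textbf{The bound \(\widehat\omega(t)\le 1\).} By the triangle inequality, the normalized sum inside the modulus has absolute value at most \(\frac{1}{M}\cdot M=1\).

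\textbf{The decay bound for \(t\ne0\).} Here \(\me(t)\ne1\), so we may sum the geometric series:
\[
\sum_{j=0}^{M-1}\me(jt)=\frac{\me(Mt)-1}{\me(t)-1}.
\]
The numerator has modulus at most \(2\). For the denominator,
\[
|\me(t)-1|=2|\sin(\pi t)|=2\sin\bigl(\pi\|t\|_{\T}\bigr)\ge 4\|t\|_{\T},
\]
where the last step is Jordan's inequality \(\sin x\ge \frac{2x}{\pi}\) on \([0,\frac{\pi}{2}]\), applicable since \(\pi\|t\|_{\T}\le\frac{\pi}{2}\). So the sum has modulus at most \(\frac{2}{4\|t\|_{\T}}=\frac{1}{2\|t\|_{\T}}\). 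Dividing by \(M\) and squaring gives
\[
\widehat\omega(t)\le\frac{1}{\bigl(2M\|t\|_{\T}\bigr)^{2}},
\]
which is the claimed bound.

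There is no real obstacle in this argument. The only points needing care are counting the pairs with difference \(n\) correctly and applying Jordan's inequality in its valid range. The bound even holds with a better constant than stated, since only \(\sin x\ge \frac{2x}{\pi}\) is used.
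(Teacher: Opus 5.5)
Your proof is correct and follows the paper's argument: counting the pairs \((j,j')\) with \(j-j'=n\) is the same as the paper's description of \(\omega\) as the convolution of the uniform measure on \(\{0,\ldots,M-1\}\) with its reflection, and the decay bound comes from the same geometric sum together with \(|\sin(\pi t)|\ge 2\|t\|_{\T}\). One correction to your closing remark: your computation gives exactly the stated constant, not a better one, and this constant cannot be improved in general, since for odd \(M\) and \(t=\frac12\) the sum \(\sum_{j=0}^{M-1}(-1)^{j}=1\) gives equality.
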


\begin{proof}[Proof of Lemma~\ref{lem:fejer}]
For the first identity, write \(\omega\) as the convolution of the uniform probability measure on \(\{0,\ldots,\exp(200r)-1\}\) with its reflection. This also gives the trivial upper bound \(1\). If \(t\ne0\) in \(\T\), the geometric sum gives
\[
\left|\frac{1}{\exp(200r)}\sum_{j=0}^{\exp(200r)-1}\me(jt)\right|\le\frac{1}{2\exp(200r)\|t\|_{\T}},
\]
where we used \(\bigl|\sin(\pi t)\bigr|\ge2\|t\|_{\T}\).
\end{proof}

For \(\V{\alpha}\in\T^{r}\), define its resonant set by
\[
R(\V{\alpha})=\left\{\V{\xi}\in S_{r}:\|\V{\xi}\cdot\V{\alpha}\|_{\T}<\frac{10\cdot20^{r}}{\exp(200r)}\right\}.
\]

These are the important frequencies. On them, the phase \(\V{\xi}\cdot\V{\alpha}\) is nearly zero, so the terms do not oscillate much along the orbit. Outside \(R(\V{\alpha})\), the phase grows quickly with \(n\), so these terms oscillate fast and have average zero. 

We need one rotation that works for every possible common difference. For each \(d\), the resonant frequencies of \(d\V{\theta}\) should span a low-dimensional space, and \(d\V{\theta}\) itself should be long enough.

\begin{lemma}\label{lem:rotation}
For every sufficiently large \(r\), there is \(\V{\theta}\in\T^{r}\) such that for every \(d\in[\exp\left(\frac{r^{2}}{1000}\right)]\),
\begin{enumerate}
    \item[\textup{(1)}] \(\dim\operatorname{span}_{\Q}R(d\V{\theta})<\frac{r}{1000}.\)
    \item[\textup{(2)}]\(\|d\V{\theta}\|_{\T^{r},\infty}>\exp\left(-\frac{r}{400}\right).\)
\end{enumerate}

\end{lemma}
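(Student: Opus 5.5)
Choose \(\V{\theta}\) uniformly at random from Haar measure on \(\T^{r}\) and show that, for each fixed \(d\in[\exp(\frac{r^{2}}{1000})]\), each of (1) and (2) fails with probability much smaller than \(\exp(-\frac{r^{2}}{1000})\). A union bound over \(d\) then gives a deterministic \(\V{\theta}\). The basic observation is that for fixed \(d\ge1\) the map \(\V{\theta}\to d\V{\theta}\) preserves Haar measure, so \(\V{\alpha}=d\V{\theta}\) is itself Haar distributed. It therefore suffices to bound the failure probabilities for a single Haar-random \(\V{\alpha}\in\T^{r}\).

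Property (2) is immediate. It fails exactly when \(\V{\alpha}\in[-\exp(-\frac{r}{400}),\exp(-\frac{r}{400})]^{r}\), an event of probability at most \((2\exp(-\frac{r}{400}))^{r}=2^{r}\exp(-\frac{r^{2}}{400})\). Multiplying by the number of values of \(d\) gives at most \(2^{r}\exp(-\frac{r^{2}}{400}+\frac{r^{2}}{1000})=o(1)\).

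For (1), put \(m_{0}=\lceil \frac{r}{1000}\rceil\) and \(\delta=\frac{10\cdot 20^{r}}{\exp(200r)}\). If \(\dim\operatorname{span}_{\Q}R(\V{\alpha})\ge m_{0}\), then \(R(\V{\alpha})\) contains \(m_{0}\) linearly independent vectors \(\V{\xi}_{1},\ldots,\V{\xi}_{m_{0}}\in S_{r}\). By Lemma~\ref{lem:spectrum}(2), there are at most \(|S_{r}|^{m_{0}}\le\exp(100r m_{0})=\exp(O(r^{2}))\) such tuples, with implied constant about \(\frac{1}{10}\). Fix one tuple and let \(V\) be its span and \(L_{V}=V\cap\Z^{r}\). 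The map \(\V{\alpha}\to(\V{\xi}_{1}\cdot\V{\alpha},\ldots,\V{\xi}_{m_{0}}\cdot\V{\alpha})\) is a surjective homomorphism \(\T^{r}\to\T^{m_{0}}\) (surjective onto its image torus, with finite index relative to \(\phi_{V}\)), because the matrix has full rank \(m_{0}\); a surjective homomorphism between compact tori pushes Haar measure to Haar measure. Hence the event that every \(\|\V{\xi}_{i}\cdot\V{\alpha}\|_{\T}<\delta\) has probability at most \((2\delta)^{m_{0}}\). Full rank means the image is all of \(\T^{m_{0}}\) as a connected closed subgroup of dimension \(m_{0}\), so no saturation is needed here. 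Now \(\delta\le\exp(-(200-\log 20)r+O(1))\le\exp(-190r)\), so \((2\delta)^{m_{0}}\le\exp(-190 r m_{0})\). The union bound over tuples and over \(d\) then gives a failure probability of at most
\[
\exp\Bigl(100 r m_{0}-190 r m_{0}+\tfrac{r^{2}}{1000}\Bigr)\le\exp\Bigl(-\tfrac{90 r^{2}}{1000}+\tfrac{r^{2}}{1000}+O(r)\Bigr)=o(1).
\]

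The main point to check is the bookkeeping in this last estimate. The exponent \(200\) in \(\exp(200r)\) must beat both the count \(\exp(100r)\) of \(S_{r}\) per vector and the factor \(20^{r}\) in \(\delta\). The leftover \(\Theta(r\,m_{0})=\Theta(r^{2}/1000)\) must also dominate the \(\frac{r^{2}}{1000}\) coming from the range of \(d\), which it does with room to spare (roughly \(90\) versus \(1\)). One more subtlety is that \(m_{0}\) must be rounded up correctly, so that "dimension \(\ge m_{0}\)" is exactly the negation of (1). Since both failure probabilities are \(o(1)\), some \(\V{\theta}\) satisfies (1) and (2) simultaneously for all \(d\) in the range.
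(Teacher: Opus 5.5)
Your proposal is correct and follows the paper's argument. You take a Haar-random \(\V{\theta}\) and use that \(\V{\theta}\mapsto(\V{\xi}_{1}\cdot d\V{\theta},\ldots,\V{\xi}_{m_{0}}\cdot d\V{\theta})\) is a surjective homomorphism onto \(\T^{m_{0}}\), which the paper checks via injectivity of the dual map \(\V{q}\mapsto d\sum_{j}q_{j}\V{\xi}_{j}\) and you check via full rank after noting \(d\V{\theta}\) is Haar distributed; you then union-bound over independent tuples from \(S_{r}\) and over \(d\), weighing \(\exp(200r)\) against \(|S_{r}|\le\exp(100r)\) and the factor \(20^{r}\), exactly as the paper does (your parenthetical about ``finite index relative to \(\phi_{V}\)'' is unnecessary, since full row rank alone gives surjectivity).
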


\begin{proof}[Proof of Lemma~\ref{lem:rotation}]
Choose \(\V{\theta}\) according to Haar measure on \(\T^{r}\). If \(\dim\operatorname{span}_{\Q}R(d\V{\theta})\ge\frac{r}{1000}\), then \(R(d\V{\theta})\) contains \(\frac{r}{1000}\) linearly independent frequencies \(\V{\xi}_{1},\ldots,\V{\xi}_{\frac{r}{1000}}\in S_{r}\). For each fixed independent tuple and each fixed \(d\), the homomorphism
\[
\V{\theta}\longrightarrow\left(\V{\xi}_{1}\cdot d\V{\theta},\ldots,\V{\xi}_{\frac{r}{1000}}\cdot d\V{\theta}\right)
\]
maps \(\T^{r}\) onto \(\T^{\frac{r}{1000}}\). Indeed, its dual sends an integer vector \(\left(q_{1},\ldots,q_{\frac{r}{1000}}\right)\) to \(d\sum_{j=1}^{\frac{r}{1000}}q_{j}\V{\xi}_{j}\), and this map is injective. The resulting point of \(\T^{\frac{r}{1000}}\) is therefore Haar distributed. The union bound gives
\[
\begin{split}
&\Prob\left(\exists d\in\left[\exp\left(\frac{r^{2}}{1000}\right)\right]:\dim\operatorname{span}_{\Q}R(d\V{\theta})\ge\frac{r}{1000}\right)\\
&\le\exp\left(\frac{r^{2}}{1000}\right)|S_{r}|^{\frac{r}{1000}}\left(\frac{20^{r+1}}{\exp(200r)}\right)^{\frac{r}{1000}}\\
&\le\exp\left(-\frac{r^{2}}{10000}\right)=o(1).
\end{split}
\]
Here the last estimate follows from \(|S_{r}|\le\exp(100r)\).

For each fixed \(d\), the point \(d\V{\theta}\) is also Haar distributed. A union bound over \(d\in[\exp\left(\frac{r^{2}}{1000}\right)]\) shows that the probability of \(\|d\V{\theta}\|_{\T^{r},\infty}\le\exp\left(-\frac{r}{400}\right)\) for some such \(d\) is at most
\[
\exp\left(\frac{r^{2}}{1000}\right)\left(2\exp\left(-\frac{r}{400}\right)\right)^{r}\le\exp\left(-\frac{r^{2}}{1000}\right).
\]
The two bad events have total probability smaller than one for all sufficiently large \(r\), which proves the lemma.
\end{proof}

The last lemma says that if the phases match exactly on the resonant frequencies, every long enough orbit reaches \(Q_{\frac{1}{100}}\). The Fej\'er weight keeps the aligned resonant terms nonnegative, so they cannot cancel the zero-frequency term.

\begin{lemma}\label{lem:orbit}
Let \(\V{\alpha}\in\T^{r}\), put \(V=\operatorname{span}_{\Q}R(\V{\alpha})\), and suppose that \(\dim V<\frac{r}{1000}\). Let \(\V{x},\V{z}\in\T^{r}\). If there is \(\V{u}\in Q_{\frac{1}{400}}\) such that \(\V{\xi}\cdot(\V{x}-\V{z}-\V{u})=0\) in \(\T\) for every \(\V{\xi}\in L_{V}\), then there is an integer \(n\) with \(|n|<\exp(200r)\) such that \(\V{x}+n\V{\alpha}-\V{z}\in Q_{\frac{1}{100}}\).
\end{lemma}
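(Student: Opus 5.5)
The plan is to detect the orbit point with the product test function \(\Psi_{r}\), shifted by the correction vector \(\V{u}\), and to average along the orbit with the Fej\'er weight \(\omega\). Set \(\V{y}=\V{x}-\V{z}-\V{u}\) and consider
\[
\Sigma=\sum_{n\in\Z}\omega(n)\Psi_{r}(\V{y}+n\V{\alpha}).
\]
Since \(\omega(n)=0\) unless \(|n|<\exp(200r)\), it suffices to show \(\Sigma>0\). Indeed, then some \(n\) with \(|n|<\exp(200r)\) has \(\Psi_{r}(\V{y}+n\V{\alpha})>0\), so \(\V{y}+n\V{\alpha}\in Q_{\frac{1}{400}}\). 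Adding \(\V{u}\in Q_{\frac{1}{400}}\) gives \(\V{x}+n\V{\alpha}-\V{z}\in Q_{\frac{1}{200}}\subseteq Q_{\frac{1}{100}}\).

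To evaluate \(\Sigma\), expand \(\Psi_{r}\) in its absolutely convergent Fourier series and swap the sums; this is legitimate since \(\omega\) is finitely supported. This gives
\[
\Sigma=\sum_{\V{\xi}\in\Z^{r}}\widehat\Psi_{r}(\V{\xi})\,\me(\V{\xi}\cdot\V{y})\,\widehat\omega(\V{\xi}\cdot\V{\alpha}).
\]
Split the frequencies into three classes: \(R(\V{\alpha})\), \(S_{r}\setminus R(\V{\alpha})\), and \(\Z^{r}\setminus S_{r}\).

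\textbf{Resonant frequencies.} Every \(\V{\xi}\in R(\V{\alpha})\) lies in \(V\cap\Z^{r}=L_{V}\), so the hypothesis gives \(\me(\V{\xi}\cdot\V{y})=1\). By Lemma~\ref{lem:fejer}, \(\widehat\omega\ge0\) and \(\widehat\Psi_{r}\ge0\), so every term in this class is nonnegative. Moreover \(\V{0}\in S_{r}\) by Lemma~\ref{lem:spectrum}, and \(\V{0}\) is trivially resonant, so this class contributes at least \(\widehat\Psi_{r}(\V{0})\widehat\omega(0)=400^{-r}\). This is the key point of the argument: the exact phase alignment supplied by \(\V{u}\), together with the positivity of the Fej\'er kernel, prevents the resonant terms from cancelling the main term. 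No bound on the number of resonant frequencies is needed, and the condition \(\dim V<\frac{r}{1000}\) is only used implicitly, through the hypothesis being meaningful.

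\textbf{Nonresonant frequencies in \(S_{r}\).} For \(\V{\xi}\in S_{r}\setminus R(\V{\alpha})\) we have \(\|\V{\xi}\cdot\V{\alpha}\|_{\T}\ge 10\cdot 20^{r}/\exp(200r)\); in particular \(\V{\xi}\cdot\V{\alpha}\neq0\). Lemma~\ref{lem:fejer} then gives
\[
\widehat\omega(\V{\xi}\cdot\V{\alpha})\le(20\cdot20^{r})^{-2}=\frac{400^{-r}}{400}.
\]
Since the coefficients \(\widehat\Psi_{r}(\V{\xi})\) are nonnegative and sum to \(1\), the total absolute contribution of this class is at most \(400^{-r}/400\).

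\textbf{Frequencies outside \(S_{r}\).} Using \(|\widehat\omega|\le1\) and Lemma~\ref{lem:spectrum}(3), these contribute at most \(400^{-r}/100\) in absolute value.

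Combining the three classes,
\[
\operatorname{Re}\Sigma\ge400^{-r}\left(1-\frac{1}{400}-\frac{1}{100}\right)>0,
\]
which proves the lemma. The only delicate step is the treatment of the resonant class, and it is handled by positivity rather than by any estimate.
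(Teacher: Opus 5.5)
Your proposal is correct and matches the paper's proof essentially step for step. You use the same Fej\'er-weighted orbit average of \(\Psi_{r}\) shifted by \(\V{u}\) and the same three-way split of frequencies (resonant, nonresonant in \(S_{r}\), outside \(S_{r}\)), arriving at the same lower bound \(400^{-r}\left(1-\frac{1}{400}-\frac{1}{100}\right)>0\), and you correctly note that the dimension hypothesis plays no direct role in this step.
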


\begin{proof}[Proof of Lemma~\ref{lem:orbit}]
Consider the nonnegative real number \(S=\sum_{n\in\Z}\omega(n)\Psi_{r}(\V{x}+n\V{\alpha}-\V{z}-\V{u})\). Absolute convergence permits termwise Fourier expansion, giving
\[
S=\sum_{\V{\xi}\in\Z^{r}}\widehat\Psi_{r}(\V{\xi})\me\bigl(\V{\xi}\cdot(\V{x}-\V{z}-\V{u})\bigr)\widehat\omega(\V{\xi}\cdot\V{\alpha}).
\]
The zero frequency contributes \(400^{-r}\). If \(\V{\xi}\in R(\V{\alpha})\), then \(\V{\xi}\in V\cap\Z^{r}=L_{V}\), so the phase is one. By Lemma~\ref{lem:fejer}, the remaining factor is nonnegative. If \(\V{\xi}\in S_{r}\setminus R(\V{\alpha})\), then Lemma~\ref{lem:fejer} and the definition of \(R(\V{\alpha})\) give \(|\widehat\omega(\V{\xi}\cdot\V{\alpha})|\le400^{-r-1}\).
For \(\V{\xi}\notin S_{r}\), we only use \(|\widehat\omega(\V{\xi}\cdot\V{\alpha})|\le1\). Since \(S\) is real, taking real parts and using Lemma~\ref{lem:spectrum} gives
\[
S\ge400^{-r}-400^{-r-1}\sum_{\V{\xi}\in S_{r}}\widehat\Psi_{r}(\V{\xi})-\sum_{\V{\xi}\notin S_{r}}\widehat\Psi_{r}(\V{\xi})>400^{-r}\left(1-\frac{1}{400}-\frac{1}{100}\right)>0.
\]
Both \(\omega\) and \(\Psi_{r}\) are nonnegative. Hence \(\Psi_{r}(\V{x}+n\V{\alpha}-\V{z}-\V{u})>0\) for some integer \(n\) with \(|n|<\exp(200r)\). The positivity set of \(\Psi_{r}\) is \(Q_{\frac{1}{400}}\), so \(\V{x}+n\V{\alpha}-\V{z}-\V{u}\in Q_{\frac{1}{400}}\). Since \(\V{u}\in Q_{\frac{1}{400}}\), we obtain \(\V{x}+n\V{\alpha}-\V{z}\in Q_{\frac{1}{200}}\subseteq Q_{\frac{1}{100}}\).
\end{proof}

\subsection{Completion of the proof}\label{subsec:completion}

\begin{proof}[Proof of Theorem~\ref{thm:main}]
Let \(k\) be sufficiently large, and choose an integer \(r\) such that \(\frac{\log k}{500}\le r\le\frac{\log k}{400}\). In particular, \(2\exp(200r)-1\le k\).

Fix centers \(\V{z}_{i}\), indexed by \(i\in[\exp\left(\frac{r}{10}\right)]\), as in Lemma~\ref{lem:centers}, and choose \(\V{\theta}\) as in Lemma~\ref{lem:rotation}. Independently for each \(i\), choose \(s_{i}\) uniformly at random from \(\{0,\ldots,r\cdot\exp\left(\frac{r}{200}\right)-1\}\). Define
\[
A_{i}=\left\{\V{x}\in Q_{\frac{1}{100}}:s_{i}\cdot\exp\left(-\frac{r}{200}\right)\le\|\V{x}\|_{2}^{2}<(s_{i}+1)\exp\left(-\frac{r}{200}\right)\right\},
\]
and put \(B=\bigcup_{i=1}^{\exp\left(\frac{r}{10}\right)}(\V{z}_{i}+A_{i})\). Color \(n\in[\exp\left(\frac{r^{2}}{1000}\right)]\) blue if \(n\V{\theta}\in B\), and red otherwise.

\begin{claim}\label{claim:no-blue}
For every choice of the \(s_{i}\), there is no blue \(3\)-AP.
\end{claim}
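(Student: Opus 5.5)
Suppose \(n, n+d, n+2d\) is a blue 3-AP with \(d\ge1\) and all three terms in \([\exp(\frac{r^{2}}{1000})]\). Then \(d<\exp(\frac{r^{2}}{1000})\), so Lemma~\ref{lem:rotation} applies to \(d\). Write \(\V{a}=n\V{\theta}\), \(\V{b}=(n+d)\V{\theta}\), \(\V{c}=(n+2d)\V{\theta}\), so that \(\V{a}-2\V{b}+\V{c}=0\) in \(\T^{r}\). Since each point is blue, there are indices \(i,j,\ell\) with \(\V{a}=\V{z}_{i}+\V{x}_{1}\), \(\V{b}=\V{z}_{j}+\V{x}_{2}\), \(\V{c}=\V{z}_{\ell}+\V{x}_{3}\), where \(\V{x}_{1}\in A_{i}\), \(\V{x}_{2}\in A_{j}\), \(\V{x}_{3}\in A_{\ell}\). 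All three \(\V{x}\)'s lie in \(Q_{\frac{1}{100}}\).

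\emph{Reduce to a single shell.} From the relation, \(\V{z}_{i}-2\V{z}_{j}+\V{z}_{\ell}=-(\V{x}_{1}-2\V{x}_{2}+\V{x}_{3})\). The right side is the image of a real vector in \((-\frac{4}{100},\frac{4}{100})^{r}=Q_{\frac{1}{25}}\). If \(i,j,\ell\) were not all equal, this would contradict Lemma~\ref{lem:centers}(1). Hence \(i=j=\ell\). Because \(\V{z}_{i}\) cancels, the real vectors \(\V{x}_{1},\V{x}_{2},\V{x}_{3}\in(-\frac{1}{100},\frac{1}{100})^{r}\) satisfy \(\V{x}_{1}-2\V{x}_{2}+\V{x}_{3}\equiv0\) mod \(\Z^{r}\). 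Each coordinate of this combination has absolute value less than \(\frac{4}{100}<1\), so the relation \(\V{x}_{1}-2\V{x}_{2}+\V{x}_{3}=0\) holds exactly in \(\R^{r}\). Likewise \(\V{y}:=\V{x}_{2}-\V{x}_{1}\) is a real vector with coordinates in \((-\frac{2}{100},\frac{2}{100})\) representing \(d\V{\theta}\) in \(\T^{r}\). Therefore \(\V{x}_{3}-\V{x}_{2}=\V{y}\) as well, and \(\|\V{y}\|_{\infty}\ge\|d\V{\theta}\|_{\T^{r},\infty}>\exp(-\frac{r}{400})\) by Lemma~\ref{lem:rotation}(2).

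\emph{Parallelogram identity.} Since \(\V{x}_{1}=\V{x}_{2}-\V{y}\) and \(\V{x}_{3}=\V{x}_{2}+\V{y}\),
\[
\|\V{x}_{1}\|_{2}^{2}+\|\V{x}_{3}\|_{2}^{2}-2\|\V{x}_{2}\|_{2}^{2}=2\|\V{y}\|_{2}^{2}\ge2\|\V{y}\|_{\infty}^{2}>2\exp\left(-\frac{r}{200}\right).
\]
All three points lie in the same shell \(A_{i}\), so each squared norm lies in \([s_{i}\exp(-\frac{r}{200}),(s_{i}+1)\exp(-\frac{r}{200}))\). Consequently the left side is strictly less than \(2(s_{i}+1)\exp(-\frac{r}{200})-2s_{i}\exp(-\frac{r}{200})=2\exp(-\frac{r}{200})\). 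This contradicts the displayed lower bound, so no blue 3-AP exists, and the argument works for every choice of the \(s_{i}\).

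The main point requiring care is the lift from \(\T^{r}\) to \(\R^{r}\). The boxes have side \(\frac{2}{100}\), so every relevant combination stays inside \(Q_{\frac{1}{2}}\), where the identification is injective; this guarantees that the real vector \(\V{y}\) genuinely has \(\|\V{y}\|_{\infty}=\|d\V{\theta}\|_{\T^{r},\infty}\). The second delicate step is matching the exponents: the shell width \(\exp(-\frac{r}{200})\) is exactly the square of the lower bound \(\exp(-\frac{r}{400})\) in Lemma~\ref{lem:rotation}, which is what makes the final inequality strict.
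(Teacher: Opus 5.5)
Your proof is correct and follows the paper's argument essentially step for step. Taking second differences and applying Lemma~\ref{lem:centers}(1) forces a single center, lifting to \(\R^{r}\) makes the relation exact, and the parallelogram identity compared against the shell width contradicts Lemma~\ref{lem:rotation}(2). You also explicitly check that \(d\) lies in the range covered by Lemma~\ref{lem:rotation}, which the paper leaves implicit.
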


\begin{poc}
Suppose that \(n-d,n,n+d\in[\exp\left(\frac{r^{2}}{1000}\right)]\) are blue for some \(d\ge1\). Choose \(i,j,\ell\) and \(\V{r}_{1}\in A_{i},\V{r}_{2}\in A_{j},\V{r}_{3}\in A_{\ell}\) so that \((n-d)\V{\theta}=\V{z}_{i}+\V{r}_{1}\), \(n\V{\theta}=\V{z}_{j}+\V{r}_{2}\), and \((n+d)\V{\theta}=\V{z}_{\ell}+\V{r}_{3}\) in \(\T^{r}\). Taking second differences gives \(\V{z}_{i}-2\V{z}_{j}+\V{z}_{\ell}=-(\V{r}_{1}-2\V{r}_{2}+\V{r}_{3})\in Q_{\frac{1}{25}}\).
Lemma~\ref{lem:centers} gives \(i=j=\ell\). Hence \(\V{r}_{1}-2\V{r}_{2}+\V{r}_{3}\) is an integer vector all of whose coordinates have absolute value less than \(\frac{1}{25}\), and so it is zero. Write \(\V{r}_{1}=\V{r}_{2}-\V{v}\) and \(\V{r}_{3}=\V{r}_{2}+\V{v}\).
Since \(\V{r}_{1},\V{r}_{2},\V{r}_{3}\) lie in the same shell, the parallelogram identity gives
\[
\|\V{v}\|_{2}^{2}=\frac{\|\V{r}_{1}\|_{2}^{2}+\|\V{r}_{3}\|_{2}^{2}}{2}-\|\V{r}_{2}\|_{2}^{2}<\exp\left(-\frac{r}{200}\right).
\]
But \(d\V{\theta}=\V{r}_{2}-\V{r}_{1}=\V{v}\) in \(\T^{r}\). Since \(\V{v}\in Q_{\frac{1}{50}}\subseteq\left(-\frac{1}{2},\frac{1}{2}\right)^{r}\), it is the representative of \(d\V{\theta}\) in \(\left(-\frac{1}{2},\frac{1}{2}\right)^{r}\), and \(\|d\V{\theta}\|_{\T^{r},\infty}\le\|\V{v}\|_{2}<\exp\left(-\frac{r}{400}\right)\), contrary to Lemma~\ref{lem:rotation}.
\end{poc}

\begin{claim}\label{claim:no-red}
With positive probability, the coloring has no red AP of length \(2\exp(200r)-1\).
\end{claim}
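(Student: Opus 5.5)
The plan is a union bound over all red APs of length \(L=2\exp(200r)-1\) in \([N]\), where \(N=\exp\left(\frac{r^{2}}{1000}\right)\). The key point is that, for each fixed progression, the relevant witness points are determined before the \(s_{i}\) are drawn. Fix such an AP. Write it symmetrically as \(\{n_{0}+nd:|n|<\exp(200r)\}\) with middle term \(n_{0}\) and \(d\ge1\). Since the AP lies in \([N]\), we have \(d\le N/(L-1)\le N\). Hence Lemma~\ref{lem:rotation} applies to \(\V{\alpha}=d\V{\theta}\), giving \(\dim\operatorname{span}_{\Q}R(\V{\alpha})<\frac{r}{1000}\). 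Put \(\V{x}=n_{0}\V{\theta}\) and \(V=\operatorname{span}_{\Q}R(\V{\alpha})\). The space \(V\) is generated by elements of \(S_{r}\).

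Next, apply property (2) of Lemma~\ref{lem:centers} to \(V\) and \(\V{x}\). This gives \(\exp\left(\frac{r}{100}\right)\) distinct indices \(i\), with vectors \(\V{u}_{i}\in Q_{\frac{1}{400}}\) satisfying the phase condition. Lemma~\ref{lem:orbit} then supplies, for each such \(i\), an integer \(n_{i}\) with \(|n_{i}|<\exp(200r)\) and
\[
\V{y}_{i}:=\V{x}+n_{i}\V{\alpha}-\V{z}_{i}\in Q_{\frac{1}{100}}.
\]
The element \(n_{0}+n_{i}d\) belongs to the AP, and \((n_{0}+n_{i}d)\V{\theta}=\V{z}_{i}+\V{y}_{i}\). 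Thus this element is blue as soon as \(\V{y}_{i}\in A_{i}\). That happens exactly when \(s_{i}=\left\lfloor \|\V{y}_{i}\|_{2}^{2}\exp\left(\frac{r}{200}\right)\right\rfloor\). This index is admissible, because \(\|\V{y}_{i}\|_{2}^{2}<r/10^{4}<r\) puts it in \(\{0,\ldots,r\exp\left(\frac{r}{200}\right)-1\}\). Crucially, the indices \(i\) and the points \(\V{y}_{i}\) depend only on \(\V{\theta}\), the centers and the AP, and not on the \(s_{i}\).

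Since the \(s_{i}\) are independent and uniform and the chosen indices are distinct, each event \(\{\V{y}_{i}\in A_{i}\}\) has probability \(\left(r\exp\left(\frac{r}{200}\right)\right)^{-1}\), and these events are independent. So the probability that the fixed AP is entirely red is at most
\[
\left(1-\frac{1}{r\exp\left(\frac{r}{200}\right)}\right)^{\exp\left(\frac{r}{100}\right)}\le\exp\left(-\frac{\exp\left(\frac{r}{200}\right)}{r}\right).
\]
The number of APs in \([N]\) is at most \(N^{2}=\exp\left(\frac{r^{2}}{500}\right)\). The product \(\exp\left(\frac{r^{2}}{500}-\frac{\exp(r/200)}{r}\right)\) is \(o(1)\), so with positive probability no red AP of length \(L\) exists.

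The main obstacle is bookkeeping rather than estimation. One must check that the hypotheses of Lemma~\ref{lem:centers}(2) and Lemma~\ref{lem:orbit} are met by the same space \(V\). One must also check that membership in \(B\) via the particular translate \(\V{z}_{i}+A_{i}\) suffices for blueness, which it does since \(B\) is a union. Finally, one must check that the dependence structure really makes the events independent across distinct \(i\); this is why the witness points are fixed before randomizing the radii.
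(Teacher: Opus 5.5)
Your proposal is correct and follows essentially the same argument as the paper. You center the progression at its middle term and use Lemma~\ref{lem:centers}(2) and Lemma~\ref{lem:orbit} to fix \(\exp\left(\frac{r}{100}\right)\) witness points before the radii \(s_i\) are drawn, then combine independence of the \(s_i\) with a union bound over \(\exp(O(r^{2}))\) progressions; the remaining differences (symmetric indexing, the explicit count \(N^{2}\), and the check \(d\le N\)) are cosmetic.
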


\begin{poc}
Fix an AP \(P\) of this length, with common difference \(d\ge1\), say
\[
P=\{n_{0}+jd:0\le j\le2\exp(200r)-2\}\subseteq\left[\exp\left(\frac{r^{2}}{1000}\right)\right].
\]
Put \(\V{x}=(n_{0}+(\exp(200r)-1)d)\V{\theta}\), \(\V{\alpha}=d\V{\theta}\), and \(V=\operatorname{span}_{\Q}R(\V{\alpha})\). By Lemma~\ref{lem:rotation}, \(\dim V<\frac{r}{1000}\), while \(V\) is generated by elements of \(S_{r}\). By Lemma~\ref{lem:centers}, we have \(\exp\left(\frac{r}{100}\right)\) distinct indices \(i\) and vectors \(\V{u}_{i}\in Q_{\frac{1}{400}}\) such that \(\V{\xi}\cdot(\V{x}-\V{z}_{i}-\V{u}_{i})=0\) for every \(\V{\xi}\in L_{V}\). For each such \(i\), there exists an integer \(n_{i}\) with \(|n_{i}|<\exp(200r)\) such that\(\V{q}_{i}:=\V{x}+n_{i}\V{\alpha}-\V{z}_{i}\in Q_{\frac{1}{100}},\) by Lemma~\ref{lem:orbit}. Since \(0\le\exp(200r)-1+n_{i}\le2\exp(200r)-2\), we have \(n_{0}+(\exp(200r)-1+n_{i})d\in P\), and its image on the torus is \(\V{x}+n_{i}\V{\alpha}=\V{z}_{i}+\V{q}_{i}\).

For each \(i\), fix one such \(n_{i}\), and hence \(\V{q}_{i}\). None of these choices depends on the variables \(s_{i}\). Since
\[
0\le\|\V{q}_{i}\|_{2}^{2}<\frac{r}{10000}<r\cdot\exp\left(\frac{r}{200}\right)\exp\left(-\frac{r}{200}\right),
\]
there is a unique \(t_{i}\in\{0,\ldots,r\cdot\exp\left(\frac{r}{200}\right)-1\}\) such that \(t_{i}\cdot\exp\left(-\frac{r}{200}\right)\le\|\V{q}_{i}\|_{2}^{2}<(t_{i}+1)\exp\left(-\frac{r}{200}\right)\).
Now \(s_{i}=t_{i}\) with probability \(\frac{1}{r\cdot\exp\left(\frac{r}{200}\right)}\), and on this event \(P\) contains a blue point. These events are independent, since the indices \(i\) are distinct. Thus
\[
\Prob(P\text{ is entirely red})\le\left(1-\frac{1}{r\cdot\exp\left(\frac{r}{200}\right)}\right)^{\exp\left(\frac{r}{100}\right)}\le\exp\left(-\frac{\exp\left(\frac{r}{100}\right)}{r\cdot\exp\left(\frac{r}{200}\right)}\right).
\]
The quotient in the exponent is \(\exp(\Omega(r))\), and there are at most \(\exp(O(r^{2}))\) APs in the ambient interval. A union bound therefore shows that, with probability \(1-o(1)\), none is entirely red.
\end{poc}

By Claims~\ref{claim:no-blue} and~\ref{claim:no-red}, there is a choice of the \(s_{i}\) for which the coloring has no blue \(3\)-AP and no red AP of length \(2\exp(200r)-1\). Since this length is at most \(k\), the same coloring has no red \(k\)-AP. The coloring is defined on an interval of length \(\exp\left(\frac{r^{2}}{1000}\right)\), so \(w(3,k)\ge\exp\left(\frac{r^{2}}{2000}\right)\) for all sufficiently large \(r\). Since \(r=\Theta(\log k)\), this gives \(w(3,k)\ge k^{c\log k}\) for some absolute constant \(c>0\), as required.
\end{proof}

\section*{Acknowledgement}
The author would like to thank Tao Feng, Zixiang Xu, Haihua Deng, Xing Zhang, Weiye Zhang, Junchi Zhang, and Yuan Chang for helpful discussions.

\bibliographystyle{abbrv}
\bibliography{Vander}

\end{document}